\newcounter{assu}
\theoremstyle{plain}
\newtheorem{lemma}{Lemma}[section]
\newtheorem{theorem}[lemma]{Theorem}
\newtheorem{proposition}[lemma]{Proposition}
\newtheorem{corollary}[lemma]{Corollary}
\theoremstyle{definition}
\newtheorem{assumption}[assu]{Assumption}
\newtheorem{remark}[lemma]{Remark}
\numberwithin{equation}{section}
\newcommand{\R}{\mathbb{R}}
\newcommand{\N}{\mathbb{N}}
\newcommand{\supp}{\text{\rm supp}}
\newcommand{\gr}{\textrm{graph}}
\newcommand{\ve}{\varepsilon}
\newcommand{\erre}{\mathbb{R}}
\newcommand{\enne}{\mathbb{N}}
\newcommand{\f}{\varphi}
\newcommand{\G}{\mathcal{G}}
\begin{document}
\title{Existence and uniqueness of optimal transport maps}

\author{Fabio Cavalletti and Martin Huesmann}
\thanks{MH gratefully acknowledges funding through CRC 1060.}
\address{RWTH, Department of Mathematics, Templergraben  64, D-52062 Aachen (Germany)}
\email{cavalletti@instmath.rwth-aachen.de}

\address{Universit\"at Bonn, Institut f\"ur angewandte Mathematik, Endenicher Allee 60, D-53115 Bonn (Germany)}
\email{huesmann@iam.uni-bonn.de}

\keywords{optimal transport; existence of maps; uniqueness of maps; measure contraction property}

\bibliographystyle{plain}

\begin{abstract}
Let $(X,d,m)$ be a proper, non-branching, metric measure space. We show existence and uniqueness of optimal transport maps for 
cost written as non-decreasing and strictly convex functions of the distance, provided $(X,d,m)$ satisfies 
a new weak property concerning the behavior of $m$ under the shrinking of sets to points, see Assumption \ref{A:mcc}. This in particular covers spaces satisfying the measure contraction property.
\\
We also prove a stability property for Assumption \ref{A:mcc}: If $(X,d,m)$ satisfies Assumption \ref{A:mcc} and $\tilde m = g\cdot m$, for some continuous function $g >0$, then also $(X,d,\tilde m)$ verifies Assumption \ref{A:mcc}. Since these changes in the reference measures do not preserve any Ricci type curvature bounds,
this shows that our condition is strictly weaker than measure contraction property. 
\end{abstract}

\maketitle

\section{Introduction}
In \cite{monge1781memoire}, Gaspard Monge studied the by now famous minimization problem 
\begin{equation}\label{e:monge problem}
\inf_{T_{\sharp}\mu_{0}=\mu_{1}} \int  d(x,T(x))  \mu_{0}(dx), 
\end{equation}
on Euclidean space, where $\mu_0$ and $\mu_1$ are two given probability measures and the minimum is taken over all maps pushing $\mu_0$ forward to $\mu_1$. This problem turned out to be very difficult because the functional is non-linear and the constraint set maybe empty. 70 years ago, Kantorovich \cite{kantorovich2006translocation} came up with a relaxation of the minimization problem \eqref{e:monge problem}. He allowed arbitrary couplings $q$ of the two measures $\mu_0$ and $\mu_1$, which we denote by the set $\Pi(\mu_0,\mu_1),$ and also more general cost functions $c:X\times X\to\R$:
\begin{equation}\label{e:kantorovich problem}
\inf_{q\in \Pi(\mu_0,\mu_1)} \int  c(x,y)  q(dx,dy).
\end{equation}
Minimizers of \eqref{e:kantorovich problem} are called optimal couplings and, therefore, this family of problems is commonly called optimal transport problems. A natural and interesting question is when do these two minimization problems coincide, i.e. when is the or an optimal coupling given by a transportation map. In \cite{Brenier}, Brenier showed using ideas from fluid dynamics that on Euclidean space with cost function $c(x,y)=|x-y|^2$ there is always a unique optimal transportation map as soon as $\mu_0$ is absolutely continuous with respect to the Lebesgue measure. Soon after, McCann \cite{mccann01polar} generalized this result to Riemannian manifolds with more general cost functions including convex functions of the distance.  By now, this result is shown in a wide class of settings, for instance for non-decreasing strictly convex functions of the distance in Alexandrov spaces \cite{bertrand2008existence}, for squared distance on the Heisenberg group \cite{ambrosio:heisenberg}, and recently for the 
squared distance on $\mathsf{CD}(K,N)$ and 
$\mathsf{CD}(K,\infty)$ spaces by Gigli \cite{gigli:maps} and for squared distance cost by Rajala and Ambrosio in a metric Riemannian like framework  \cite{ambrosio2011slopes}. 
\medskip\\
In this paper we show existence and uniqueness of optimal transport maps on proper, non-branching, metric measure spaces 
satisfying a new condition, Assumption \ref{A:mcc}, for cost functions of the form $c(x,y)=h(d(x,y))$, with $h$ strictly convex and non-decreasing. 

Assumption \ref{A:mcc} does not imply any lower curvature bounds in the sense of Lott, Sturm and Villani. 
In particular in Section \ref{S:notRicci} we prove that Assumption \ref{A:mcc} cannot imply the measure contraction property, $\mathsf{MCP}$.
On the other hand the measure contraction property implies Assumption \ref{A:mcc}.
Therefore our result applies to spaces enjoying $\mathsf{MCP}$, 
recovers most of the previously mentioned results and in many cases also extends them. 

To our knowledge this is the first existence result of optimal maps in metric spaces for 
$c(x,y)=h(d(x,y))$, with $h$ strictly convex and non-decreasing with no assumption 
on a lower bound on the Ricci curvature of the space. 
For $h = id$, existence of optimal maps, again with no assumption on the curvature of the metric space, 
has been obtained in \cite{biacava:streconv}.
\medskip\\
The crucial idea for the proof of the main result is to \emph{approximate the $c$-cyclically monotone set} on which the optimal measure is concentrated by means of a  suitably chosen sequence of $c$-cyclically monotone sets representing transports into a discrete target.

We conclude this Introduction by describing the structure of the paper. In Section \ref{S:notation} we introduce the general setting of the paper, define Assumption \ref{A:mcc} and state 
the two main results: the existence of optimal transport maps (Theorem \ref{T:statemain1}) and the stability under changes 
in the reference measure of $(X,d,m)$ of Assumption \ref{A:mcc} (Theorem \ref{T:statemain2}).
In Section \ref{S:notRicci} we prove Theorem \ref{T:statemain2} while Section \ref{S:evolution} and Section \ref{S:final} are devoted to the proof of Theorem \ref{T:statemain1}.

\section{Notation and main result}\label{S:notation}
We now introduce the setting of this article. If not explicitly stated otherwise we will always assume to work in this framework.

Let $(X,d,m)$ be a proper, non-branching, metric measure space, that is 
\begin{itemize}
\item[-] $(X,d)$ is a proper, complete and separable metric space with a non-branching geodesic structure; 
\item[-] $m$ is a positive Borel measure, finite over compact sets whose support coincides with $X$.
\end{itemize}
In case we drop the proper assumption, we will refer to $(X,d,m)$ just as non-branching metric measure space.
Let $\mu_{0}, \mu_{1}$  be probability measures over $X$ and let $h : [0,\infty) \to [0, \infty)$ be a strictly convex and non-decreasing map. 

We study the following minimization problem
\begin{equation}\label{E:Mongeproblem}
\min_{T_{\sharp}\mu_{0}=\mu_{1}} \int h( d(x,T(x) ))  \mu_{0}(dx),
\end{equation}
where $T_\sharp\mu_0$ denotes the push forward of $\mu_0$ under the map $T$. In the sequel, we will often denote the cost function  $h \circ d$ just with $c.$
To get hands on the minimization problem \eqref{E:Mongeproblem} we also study its relaxed form, the Kantorovich problem. Let $\Pi(\mu_{0},\mu_{1})$ be the set of transference plans, i.e.
\[
\Pi(\mu_{0},\mu_{1}) : = \{ \pi \in \mathcal{P}(X \times X) : (P_{1})_{\sharp}\pi = \mu_{0}, (P_{2})_{\sharp}\pi = \mu_{1}\},
\]
where $P_{i} : X \times X \to X$ is the projection map onto the $i$-th component, $P_{i}(x_{1},x_{2}) = x_{i}$ for $i =1,2$.

We will \emph{always assume that $\mu_{0}$ and $\mu_{1}$ have finite $c$-transport distance} in the sense that
\[
\inf \left\{  \int_{X \times X} h(d(x,y)) \pi(dxdy) : \pi \in \Pi(\mu_{0},\mu_{1})  \right\} < \infty.
\]
Recall that a transference plan $\pi \in \Pi(\mu_{0},\mu_{1})$  is said to be $c$-cyclically monotone if there exists $\Gamma$ so that
$\pi(\Gamma) =1$ and for every $N \in \enne$ and every $(x_{1},y_{1}) \dots, (x_{N},y_{N}) \in \Gamma$ it holds 
\[
\sum_{i =1}^{N} c(x_{i},y_{i}) \leq \sum_{i =1}^{N} c(x_{i+1},y_{i}),
\]
with $x_{N+1} = x_{1}$.

We also introduce a few objects connecting geodesics of the space $X$ to optimal transport plans.
Let 
\[
\G(X) \subset C([0,1];X),
\] 
be the set of geodesics endowed with the uniform topology inherited from $C([0,1];X)$. Being a closed subset of $C([0,1];X)$, it is Polish.
For any $t \in [0,1]$ consider the map 
\[
e_{t} : \G(X) \to X, 
\]
the evaluation at time $t$ defined by $e_{t}(\gamma) =\gamma_{t}$. 
For a subset $A\subset X$ and a point $x\in X$ the $t-$intermediate points between $A$ and $x$ are defined as
\begin{equation}\label{E:evodef}
A_{t,x} := e_t(\{\gamma \in\G(X): \gamma_0\in A, \gamma_1=x\})\,.
\end{equation}
Assuming $A$ compact, in a general non-branching metric measure space, the set $A_{t,x}$ is closed. If we also assume the space to be proper, as we do here, 
the set $A_{t,x}$, being bounded, is indeed compact.

This evolution defined as \eqref{E:evodef} will play a fundamental role in our analysis. In particular we make the following
\begin{assumption}\label{A:mcc}
A non-branching, metric measure space $(X,d,m)$ verifies Assumption \ref{A:mcc} if
for every compact set $K\subset X$ 
there exists a measurable function $f:[0,1]\to(0,1]$ with 
\[
\limsup_{t\to 0}f(t) > \frac{1}{2},
\]
and a positive $\delta \leq 1$ such that 
\[
m(A_{t,x})\geq f(t)\cdot m(A), \qquad \forall   0 \leq t \leq \delta,
\]
for any compact set $A \subset K$ and any base point $x \in K$.
\end{assumption}

\bigskip

We can now state the main result of this paper.

\begin{theorem}\label{T:statemain1}
Let $(X,d,m)$ be a proper, non-branching, metric measure space verifying Assumption \ref{A:mcc}. Let $\mu_0$ and $\mu_1$ be two probability measures over $X$ with finite $c$-transport distance.
If $\mu_0\ll m$ and $h$ is strictly-convex and non-decreasing, the optimal transport problem associated to \eqref{E:Mongeproblem}
has a unique solution induced by a map.
\end{theorem}

In detail we will prove that if $\mu_0\ll m$ then any  $c$-cyclically monotone plan $\pi$ is induced by a map $T : X \to X$.
With $\pi$ induced by a map we mean that $\pi=(id,T)_\sharp \mu_0$. This implies that the two minimization problems \eqref{e:monge problem} and \eqref{e:kantorovich problem} coincide. Then a direct Corollary of this result is the uniqueness of the optimal coupling.
We will prove the claim by showing that branching at starting points does not happen almost surely. 

\medskip

Regarding Assumption \ref{A:mcc}, we will prove the following result, that can be understood as a stability property. 
Here the space is not needed to be proper.
\begin{theorem}\label{T:statemain2}
Let $(X,d,m)$ be a non-branching metric measure space verifying Assumption \ref{A:mcc}. Consider a continuous function $g: X \to (0,\infty)$ and the measure $\tilde m : = g\cdot m$. 
Then $(X,d,\tilde m)$ is a non-branching metric measure space verifying Assumption \ref{A:mcc}.
\end{theorem}

\section{On Assumption \ref{A:mcc}}\label{S:notRicci}
It is clear that spaces satisfying the \emph{measure contraction property} -- for a definition we 
refer to \cite{ohta:mcp, sturm:MGH2} -- also satisfy Assumption \ref{A:mcc}. 
However, as we will prove in this section, Assumption \ref{A:mcc} does not imply the measure contraction property or, more in general,
any synthetic Ricci curvature bounds.  

In detail, we will show that if $(X,d,m)$ is a non-branching metric measure space verifying Assumption \ref{A:mcc} and $\tilde m = g m$, 
with $g$ continuous and strictly positive, then also $(X,d,\tilde m)$ verifies Assumption \ref{A:mcc}.
Since this kind of changes in the measure destroy Ricci lower bounds, Assumption \ref{A:mcc} cannot imply any of them. See \cite{sturm:MGH2}, Theorem 1.7.

The setting of this subsection is slightly different from the remaining of this note, so we will specify all the assumptions needed in each statement.
We start with two simple lemmas.

\begin{lemma}\label{L:compact}
Let $(X,d,m)$ be a metric measure space. For any compact set $K$ and any $\ve > 0$ there exists $n \in \N$ and $K_{i} \subset K$ compact for $i = 1, \dots, n$ such that
\[
diam(K_{i}) \leq \ve,  \qquad m\left( K \setminus \bigcup_{i =1}^{n} K_{i} \right) \leq \ve,
\]
and $K_{i}\cap K_{j} = \emptyset$ for $i\neq j$.
\end{lemma}

\begin{proof}
So let $\ve >0$ be given. Then consider the open covering of $K$ given by $\{ B_{\ve}(x)\}_{x\in K}$. By compactness, there exists  finitely many $\{x_{i}\}_{i\leq n}$ so that every point of $K$ is at distance less than $\ve$ for some $x_{i}$. Then consider the compact sets $H_{i} : = K \cap \overline B_{\ve}(x)$ for $i =1,\dots,n$.
Clearly the union of all $H_{i}$ covers $K$ and each of $H_{i}$ has diameter less than $\ve$. Taking differences we can pass to a family of Borel sets $\hat H_{i}$ so that 
\[ 
diam (\hat H_{i}) \leq \ve, \qquad \bigcup_{i =1}^{n} \hat H_{i} = K.
\]
with $\hat H_{i} \cap \hat H_{j} = \emptyset$ if $i \neq j$. Then by inner regularity with compact sets, choose for each $i \leq n$ a compact set $K_{i} \subset \hat H_{i}$ so that 
$m(\hat H_{i} \setminus K_{i}) \leq \ve/n$. The claim follows.
\end{proof}

\begin{lemma}\label{L:mcclocal}
Let $(X,d,m)$ be a non-branching metric measure space. Suppose that for each $K\subset X$ compact there exist $\delta,\ve >0$ and 
a measurable function $f : [0,\delta] \to (0,\infty)$ with $\limsup_{t\to 0}f(t) > 1/2$, so that 
\[
m(A_{t,x}) \geq f(t) m(A),\quad \forall t \in [0,\delta],
\]
for any $x \in K$ and $A\subset K$ compact with $diam (A) \leq \ve$. Then $(X,d,m)$ verifies Assumption \ref{A:mcc}. 
\end{lemma}

\begin{proof}
Consider $K \subset X$ compact set. Let $\delta, \ve > 0$ and the measurable map $f$ given by the hypothesis. Fix also $x \in K$.
Let $A \subset K$ be any compact set. Now for any $\eta < \ve$ consider the finite family of disjoint compact $\{ A_{i}\}_{i \leq n(\eta)}$ sets given by Lemma \ref{L:compact}.
Then since the space is non-branching and $diam(A_{i})\leq \ve$ it follows that 
\begin{align*}
m(A_{t,x}) = &~ \sum_{ i \leq n(\eta)} m((A_{i})_{t,x})  \crcr
\geq &~ f(t) \sum_{ i \leq n(\eta)} m(A_{i}) \crcr
\geq &~ f(t) m(A) -\eta f(t),
\end{align*}
for all $t \in [0,\delta]$. Since $\eta$ was any positive number less than $\ve$ and $\delta$ depends only on $K$ and $\ve$, the claim follows.
\end{proof}

It follows from Lemma \ref{L:mcclocal} that to verify Assumption \ref{A:mcc} it is sufficient to consider compact sets of small diameter. 
This already suggests that Assumption \ref{A:mcc} is stable under continuous changes of the measure as the one we proposed few lines above.
We now state and prove this stability property.

\begin{theorem}\label{T:newdensity}
Let $(X,d,m)$ be a non-branching metric measure space verifying Assumption \ref{A:mcc}. Consider a continuous function $g: X \to (0,\infty)$ and the measure $\tilde m : = g\cdot m$. 
Then $(X,d,\tilde m)$ is a non-branching metric measure space verifying Assumption \ref{A:mcc}.
\end{theorem}

\begin{proof} 
{\it Step 1.}
Note first, that by continuity of $g$, $\tilde m$ is finite over compact sets and therefore $(X,d,\tilde m)$ is a non-branching, metric measure space.
Let $K \subset X$ be any compact set and $\delta >0$ and $f$ measurable be given by Assumption \ref{A:mcc} for $(X,d,m)$. 
Note that $diam(K)$ is bounded, say by $M >0$.
Then for any $A \subset K$ compact, $x \in K$ and $t \in [0,\delta]$ the following chain of inequalities holds:
\begin{align*}
\tilde m (A_{t,x}) = &~ \int_{A_{t,x}} g(x) m(dx) \crcr
\geq &~ \inf \{ g(x) : x \in A_{t,x} \} \, m(A_{t,x}) \crcr
\geq &~ \inf \{ g(x) : x \in A_{t,x} \} \, f(t) m(A) \crcr
\geq &~ \frac{\inf \{ g(x) : x \in A_{t,x} \} }{\max \{ g(x) : x \in A \}}\, f(t) \int_{A}g(x)m(dx) \crcr
= &~\frac{\inf \{ g(x) : x \in A_{t,x} \} }{\max \{ g(x) : x \in A \}}\, f(t) \tilde m(A).
\end{align*}
Moreover from Lemma \ref{L:mcclocal} it follows that we can focus only on compact $A$ with arbitrarily small diameter. 

{\it Step 2.} Then we reason as follows: consider $\eta > 0$ so that 
\[
\left( 1 -\frac{\eta}{\alpha} \right) \limsup_{t \to 0} f(t) > \frac{1}{2},
\]
where $\alpha >0$ is so that $g(x) > \alpha$ for all $x \in K$.
Then since $g$ is uniformly continuous over $K$, there exists $\ve >0$ so that $|g(z) - g(w)| \leq \eta$ whenever $d(z,w) \leq 2 \ve$ for $z,w \in K$.

Let now $A \subset K$ be any compact set with $diam (A) \leq \ve$ and take $t \leq \min\{\delta,  \ve/M\}$. 
Then if $z \in A_{t,x}$ and $w \in A$, it follows that $d(z,w) \leq 2\ve$:
indeed there exists a geodesic $\gamma$ so that $\gamma_{0} \in A$, $\gamma_{1} = x$ and $\gamma_{t} = z$, then 
\[
d(z,w) \leq d(z,\gamma_{0}) + d(\gamma_{0},w) \leq t \cdot diam (K) + \ve \leq 2\ve.
\]

Then if $A \subset K$ compact set with $diam (A) \leq \ve$, $x \in K$ and $t \leq \min\{\delta,  \ve/M\}$ and
$x_{M} \in A$ so that $g(x_{M}) =  \max \{ g(x) : x \in A \}$, we have:
\[
1 - \frac{\inf \{ g(x) : x \in A_{t,x} \} }{\max \{ g(x) : x \in A \}} = \frac{\sup\{ g(x_{M}) - g(z) : z \in A_{t,x} \}}{g(x_{M})} \leq \frac{\eta}{\alpha},
\]
and therefore 
\[
\tilde m (A_{t,x}) \geq \left( 1 -\frac{\eta}{\alpha} \right) f(t) \tilde m(A).
\]
By the choice of $\eta$ we have proved Assumption \ref{A:mcc} for all compact sets with diameter smaller than $\ve$. Lemma \ref{L:mcclocal} gives the claim.
\end{proof}

Nevertheless if $(X,d,m)$ is also proper, Assumption \ref{A:mcc} carries some geometric property of the space.
\begin{proposition}
 Any proper, non-branching, metric measure space $(X,d,m)$ satisfying Assumption \ref{A:mcc} is locally doubling.
\end{proposition}
\begin{proof}
Take any ball $B_{2r}$ of radius $2r$. Fix $0<t\leq\delta$ and $n$ such that $(1-t)^n\leq 1/2$. 
Contracting $B_{2r}$ to its center yields $(B_{2r})_t=B_{(1-t)2r}.$ Contracting $B_{(1-t)2r}$ to 
its center yields $(B_{(1-t)2r})_t=B_{(1-t)^2 2r}$. 
Since $(X,d,m)$ is proper we can use Assumption \ref{A:mcc} and estimate
$$m(B_{(1-t)^2 2r})\geq f(t) m(B_{(1-t)2r}) \geq f(t)^2 m(B_{2r}).$$
Repeating this another $n-2$ times yields
$$m(B_r)\geq m(B_{(1-t)^n2r})\geq f(t)^n m(B_{2r}).$$
\end{proof}

\begin{remark}
Assume that $(X,d,m)$ is locally doubling and for any compact set $K \subset X$ there exists $0 < \delta \leq 1$ such that for any $t \leq \delta$ there exists a map 
$F_{t} : K \times K \to X$
such that 
\[
d(x,F_{t}(x,y)) = t d(x,y), \qquad 
\frac{1}{L(t)} \,d(x,z) \leq d(F_{t}(x,y), F_{t}(z,y)   ) \leq L(t)\, d(x,z)
\]
and $L(t) \to 1$ as $t$ goes to 0. Moreover, assume that $F_{t}$ varies continuously in time and for all compact sets $K$:
\begin{equation*}
\limsup_{t \to 0} \inf  \left\{ \frac{m(B_{r}(F_{t}(x,y)  ))}{ m(B_{r}(x)) } :  x,y \in K, r > 0 \right\} > \frac{1}{2}.
\end{equation*}
Then it is not hard to show using covering theorems that $(X,d,m)$ verifies Assumption \ref{A:mcc}.

This says that a certain type of Ahlfors regularity together with a bi-Lipschitz selection of t-intermediate points implies Assumption \ref{A:mcc}.
\end{remark}

\section{Evolution estimates}\label{S:evolution}
Following Section \ref{S:notation}, we fix once for all $(X,d,m)$ a proper, non-branching, metric measure space verifying 
Assumption \ref{A:mcc}, two probability measures $\mu_{0}, \mu_{1}$ with $\mu_0\ll m$ and 
$h : [0,\infty) \to [0, \infty)$ strictly convex and non-decreasing. 

Since we have to prove a local property, we can assume that 
$\supp(\mu_{0}),\supp(\mu_{1}) \subset K$ with $K$ compact.  Then by standard results in optimal transportation, 
there exists a couple of Kantorovich potentials $(\f,\f^{c})$ such that if 
\[
\Gamma : = \{ (x,y)\in X \times X : \f(x) +\f^{c}(y) = c(x,y)\},
\]
then the transport plan $\pi$ is optimal iff $\pi(\Gamma)=1$ (e.g. see Theorem 5.10 in \cite{villa:Oldnew}). Note also that the set $\Gamma$ is $c$-cyclically monotone.
So also $K,\f,\f^{c}$ and $\Gamma$ are fixed.

We start by proving the standard property of geodesics belonging to the support of the optimal dynamical transference plan $\pi$: they cannot meet at the same time $t$ if $t\neq 0, 1$. For existence results and details on dynamical transference plans we refer to \cite{villa:Oldnew} Chapter 7.

\begin{lemma}\label{L:noncrossing}
Let $(x_{0},y_{0}), (x_{1},y_{1}) \in \Gamma$ be two distinct points.
Then for any $t \in (0,1)$,  
\[
d(x_{0}(t),x_{1}(t))>0,
\]
where $x_{i}(t)$ is any $t$-intermediate point between $x_{i}$ and $y_{i}$, for $i =0,1$.
\end{lemma}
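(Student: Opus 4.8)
The plan is to argue by contradiction, using only the Kantorovich potential, the convexity and monotonicity of $h$, and the non-branching hypothesis; the measure contraction property is not needed for this lemma. Suppose that for some $t\in(0,1)$ there are geodesics $\gamma^{0}$ from $x_{0}$ to $y_{0}$ and $\gamma^{1}$ from $x_{1}$ to $y_{1}$ with $\gamma^{0}_{t}=\gamma^{1}_{t}=:z$; set $a:=d(x_{0},y_{0})$ and $b:=d(x_{1},y_{1})$, so that $d(x_{0},z)=ta$, $d(z,y_{0})=(1-t)a$, $d(x_{1},z)=tb$, $d(z,y_{1})=(1-t)b$. (If $a=b=0$ then $z=x_{0}=x_{1}$ and $y_{0}=y_{1}$, against distinctness, so we may assume $a>0$ or $b>0$.) Since $\varphi(x)+\varphi^{c}(y)\le c(x,y)$ for all $x,y$ with equality on $\Gamma$, and $c=h\circ d$,
\[
c(x_{0},y_{0})+c(x_{1},y_{1})=\varphi(x_{0})+\varphi^{c}(y_{0})+\varphi(x_{1})+\varphi^{c}(y_{1})\le c(x_{0},y_{1})+c(x_{1},y_{0});
\]
on the other hand, by the triangle inequality through $z$ one has $d(x_{0},y_{1})\le ta+(1-t)b$ and $d(x_{1},y_{0})\le tb+(1-t)a$, so, using that $h$ is non-decreasing and then convex,
\[
c(x_{0},y_{1})+c(x_{1},y_{0})\le h(ta+(1-t)b)+h(tb+(1-t)a)\le h(a)+h(b)=c(x_{0},y_{0})+c(x_{1},y_{1}).
\]
Hence every inequality in this chain, including the two componentwise monotonicity estimates and the two componentwise convexity estimates, is an equality.

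From $h(ta+(1-t)b)=th(a)+(1-t)h(b)$ with $t\in(0,1)$ and the strict convexity of $h$ we get $a=b$. A non-decreasing strictly convex function on $[0,\infty)$ is strictly increasing (were it constant on a nondegenerate interval it would be affine there, contradicting strict convexity); since $h(d(x_{0},y_{1}))=h(ta+(1-t)b)=h(a)$ with $d(x_{0},y_{1})\le a$, and likewise for $d(x_{1},y_{0})$, we conclude $d(x_{0},y_{1})=d(x_{1},y_{0})=a$. Gluing $\gamma^{0}$ restricted to $[0,t]$ with $\gamma^{1}$ restricted to $[t,1]$ at the common value $z$, and reparametrising, yields a curve $\alpha$ from $x_{0}$ to $y_{1}$; a short check that all pairwise distances along $\alpha$ are affine in the parameter (again via the triangle inequality through $z$ and $a=b$) shows $\alpha$ is a constant-speed geodesic, and symmetrically the glued curve $\beta$ from $x_{1}$ to $y_{0}$ is a geodesic.

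Finally I invoke the non-branching property. The geodesics $\alpha$ and $\gamma^{0}$ coincide on $[0,t]$, so $\alpha=\gamma^{0}$; evaluating at time $1$ gives $y_{0}=y_{1}$, and it also gives $\gamma^{0}|_{[t,1]}=\gamma^{1}|_{[t,1]}$. The time-reversed geodesics $s\mapsto\gamma^{0}_{1-s}$ and $s\mapsto\gamma^{1}_{1-s}$ then share the starting point $y_{0}=y_{1}$ and agree on $[0,1-t]$, so non-branching forces $\gamma^{0}=\gamma^{1}$, in particular $x_{0}=x_{1}$; hence $(x_{0},y_{0})=(x_{1},y_{1})$, contradicting the assumption that the two points are distinct. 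The only delicate point is this end game: a single application of non-branching recovers merely the common endpoint $y_{0}=y_{1}$, so one must pass to the reversed geodesics (using the coincidence already obtained on $[t,1]$) to also pin down $x_{0}=x_{1}$, and before that one must confirm that the glued curves $\alpha,\beta$ are genuine geodesics and not just paths of the correct length.
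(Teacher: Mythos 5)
Your proof is correct and follows essentially the same strategy as the paper's: two-point $c$-cyclical monotonicity together with the strict convexity of $h$ forces $d(x_{0},y_{0})=d(x_{1},y_{1})$, and then gluing the two geodesics at the common $t$-intermediate point produces a branching contradiction. You merely merge the paper's two cases into a single chain of equalities, and you are in fact somewhat more careful than the paper in checking that the glued curve is a genuine geodesic (via $d(x_{0},y_{1})=d(x_{0},y_{0})$) and in the final endgame, where two applications of non-branching (forward and time-reversed) are needed to conclude $(x_{0},y_{0})=(x_{1},y_{1})$.
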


\begin{proof} 

Assume by contradiction the existence of $x_{0}(t) =x_{1}(t) \in X$, 
$t$-intermediate points of $(x_{0},y_{0})$ and $(x_{1},y_{1})$, i.e. 
\[
d(x_{0},x_{0}(t)) = t d(x_{0},y_{0}), \qquad d(x_{0}(t),y_{0}) = (1-t) d(x_{0},y_{0}),
\]
and 
\[
d(x_{1},x_{1}(t)) = t d(x_{1},y_{1}), \qquad d(x_{1}(t),y_{1}) = (1-t) d(x_{1},y_{1}).
\]
{\it Case 1:} $d(x_{0},y_{0}) \neq d(x_{1},y_{1})$. Then
\begin{align*}
h(d(x_{0},y_{1})) + h(d(x_{1}, y_{0})) \leq &~ 
h\big(d(x_{0}, x_{0}(t)) + d(x_{1}(t),y_{1})\big) + 
h\big(d(x_{1}, x_{1}(t)) + d(x_{0}(t),y_{0})\big) \crcr
 < &~ t h(d(x_{0}, y_{0}))+ (1-t) h(d(x_{1}, y_{1}))\crcr 
&~  +  t h(d(x_{1}, y_{1})) + (1-t) h(d(x_{0}, y_{0}))  \crcr
= &~    h(d(x_{0}, y_{0}))+ h(d(x_{1}, y_{1})).
\end{align*}
Where between the first and the second line  we have used the strict convexity of $h$.
From $c$-cyclical monotonicity we have a contradiction.

{\it Case 2.}  $d(x_{0},y_{0}) = d(x_{1},y_{1})$. Let $\gamma^{0}, \gamma^{1} \in \G(X)$ be such that 
\[
\gamma^{0}_{0} = x_{0}, \quad \gamma^{0}_{t} = x_{0}(t), \quad \gamma^{0}_{1} = y_{0}, \qquad \qquad
\gamma^{1}_{0} = x_{1}, \quad \gamma^{1}_{t} = x_{1}(t), \quad \gamma^{1}_{1} = y_{1},
\] 
and define the curve $\gamma : [0,1] \to X$ by
\[
\gamma_{t} : = 
\begin{cases}
\gamma^{0}_{s}, & s \in [0,t] \crcr
\gamma^{1}_{s}, & s \in [t,1].
\end{cases} 
\]
Then $\gamma$ is a geodesic different from $\gamma^{0}$ but coinciding with it on the non trivial interval $[0,t]$. Since this is a contradiction with the 
non-branching assumption, the claim is proved.
\end{proof}

\begin{remark}
In the framework of metric measure spaces enjoying synthetic Ricci curvature bounds, like $\mathsf{CD}(K,N)$, see \cite{sturm:MGH2} for its definition,
it has recently been shown by Rajala that assuming the convexity of the entropy along all $L^{2}$-Wasserstein geodesics implies that any optimal transport plan is concentrated 
on a family of non-branching geodesics, even if the space is not assumed to be non-branching. 
Unfortunately, in our framework such a technique cannot be used, at least for now. Indeed while all the curvature information are stated in terms of $L^{2}$-Wasserstein geodesics,
here we would need a non-branching property of the geodesics of the space $X$ with final and initial points forming a $c$-cyclically monotone set. The latter property cannot be deduced straightforwardly by $d^{2}$-monotonicity. 
For the moment the only result going in this direction is for $h = id$ and it is proven in \cite{cava:RCDmonge}.
\end{remark}

For any compact set $\Lambda \subset X \times X$ we can now consider the associated evolution map. 
For every $t \in [0,1]$ and every $A \subset X$ compact set
\[
A_{t,\Lambda} : = e_{t} \left( (e_{0},e_{1})^{-1}\left( (A \times X) \cap \Lambda\right) \right).
\]
It is easily seen that $A_{t,\Lambda}$ is a closed and bounded set. Hence since $(X,d,m)$ is proper we also obtain compactness of $A_{t,\Lambda}$.
Moreover we will use the following notation: to any $\Lambda \subset X\times X$ we associate 
the following set:
\begin{equation}\label{E:square}
\hat \Lambda : = \left(P_{1}(\Lambda) \times P_{2}(\Lambda) \right)\cap \Gamma.
\end{equation}

We are now ready to prove the main consequence of Assumption \ref{A:mcc}.

\begin{proposition}\label{P:evolution}
For any $\Lambda \subset \Gamma$ compact the following inequality holds: 
\begin{equation}\label{E:evolution}
m(A_{t,\hat \Lambda}) \geq f(t) m(A), \qquad \, \, t \in [0,\delta],
\end{equation}
for any $A \subset P_{1}(\Lambda)$.
\end{proposition}

\begin{proof}
{\it Step 1.} Let $\{ y_{i} \}_{i \in \enne} \subset P_{2}(\Lambda)$ be a dense set in $P_{2}(\Lambda)$.

Consider the following family of sets: for $n \in \enne$ and $i \leq n$
\[
E_{n}(i) : = \{ x \in P_{1}(\Lambda) : c(x,y_{i}) - \f^{c}(y_{i}) \leq c(x,y_{j}) - \f^{c}(y_{j}), j=1,\cdots,n \}.
\]
If we now consider  
\[
\Lambda_{n} : = \bigcup_{i =1 }^{n}  E_{n}(i)\times \{y_{i}\},  
\]
it is straightforward to check that $P_{1}(\Lambda_{n}) =  P_{1}(\Lambda)$ and $\Lambda_{n}$ is $c$-cyclically monotone. Indeed, for any 
$(x_{1},y_{1}), \dots, (x_{m},y_{m}) \in \Lambda_{n}$, by definition it holds that
\[
c(x_{i},y_{i}) - \f^{c}(y_{i}) \leq c(x_{i},y_{i+1}) - \f^{c}(y_{i+1}), \qquad i =1,\dots, m.
\]
Taking the sum over $i$, the property follows. 

By Assumption \ref{A:mcc} there exists $f : [0,1] \to \erre$ measurable with $f(0)> 1/2$, 
independent of the sequence $\{y_{i}\}_{i\in \enne}$ and of $n$, such that  for any $A \subset P_{1}(\Lambda)$ compact it holds that
\[
m \left( \left(A\cap E_{n}(i)\right)_{t,y_{i}} \right) \geq f(t) m(A\cap E_{n}(i)), \qquad \forall t \in [0,\delta],
\] 
where $\left(A\cap E_{n}(i)\right)_{t,y_{i}} = (A\cap E_{n}(i))_{t, E_{n}(i) \times \{y_{i} \}}$. 
Note that since $A = \cup_{i\leq n} A \cap E_{n}(i)$ it follows that
\begin{align*}
A_{t,\Lambda_{n}} = &~ e_{t} \left( (e_{0},e_{1})^{-1}((A \times X) \cap \Lambda_{n}) \right) \crcr
= & ~\bigcup_{i\leq n} e_{t}\left( (e_{0},e_{1})^{-1}(((A\cap E_{n}(i)) \times X) \cap \Lambda_{n}) \right) \crcr
= &~\bigcup_{i\leq n} (A\cap E_{n}(i))_{t,\Lambda_{n}} \crcr
\supset &~\bigcup_{i\leq n} (A\cap E_{n}(i))_{t,E_{n}(i)\times\{y_{i}\}}.
\end{align*}
Moreover,
Lemma \ref{L:noncrossing} implies
\[
\left(A\cap E_{n}(i)\right)_{t,y_{i}} \cap  \left(A\cap E_{n}(j)\right)_{t,y_{j}} = \emptyset, \qquad i\neq j,
\]
for all $t \in (0,1)$.

Then it holds for all $t \in [0,\delta]$:
\begin{align}
m(A_{t,\Lambda_{n}}) \geq &~ m \left( \bigcup_{i=1}^{n}\left( A \cap E_{n}(i)  \right)_{t,E_{n}(i)\times\{y_{i}\}}\,   \right)  = 
\sum_{i=1}^{n} m \left( \left( A \cap E_{n}(i)  \right)_{t,y_{i}}   \right)  \crcr
\geq &~ f(t)\sum_{i=1}^{n} m \left(  A \cap E_{n}(i)  \right)   \crcr
\geq &~ f(t) m \left( \bigcup_{i=1}^{n}  A \cap E_{n}(i)   \right) \crcr
= &~ f(t)m (A ).
\end{align}

{\it Step 2.} Note that for every $n \in \enne$, $\Lambda_{n} \subset \supp(\mu_{0}) \times \supp(\mu_{1})$ and the latter, by assumption, is a subset of $K \times K$. Since the space of closed subsets of $K \times K$ endowed with the Hausdorff metric $(\mathcal{C}(K\times K), d_{\mathcal{H}})$ is a compact space, 
there exists a subsequence $\{\Lambda_{n_{k}} \}_{k \in \enne}$ and $\Theta \subset K\times K$ compact such that 
\[
\lim_{k\to \infty} d_{\mathcal{H}}(\Lambda_{n_{k}}, \Theta) = 0.
\]
Since the sequence $\{y_{i}\}_{i\in \enne}$ is dense in $P_{2}(\Lambda)$ and $\Lambda \subset \Gamma$ is compact, by definition of $E_{n}(i)$,
necessarily for every $(x,y) \in \Theta$ it holds
\[
\f(x)+ \f^{c}(y) = c(x,y), \quad x \in P_{1}(\Lambda), \quad y \in P_{2}(\Lambda). 
\]
Hence $\Theta \subset \left( P_{1}(\Lambda) \times P_{2}(\Lambda) \right) \cap \Gamma = \hat \Lambda$. 
To conclude the proof we have to observe 

\[
m(A_{t,\Theta}) \geq \limsup_{k\to \infty} m(A_{t,\Lambda_{n_{k}}})\, .
\]
Indeed, since $A_{t,\Theta}$ is a compact set, it follows that if $A_{t,\Theta}^{\ve} = \{ x \in X : d(x, A_{t,\Theta})\leq\ve\}$, then 
for $k$ sufficiently big $A_{t,\Lambda_{n_{k}}} \subset A_{t,\Theta}^{\ve}$ and $m(A_{t,\Theta}^{\ve})$ converges to $m(A_{t,\Theta})$.

Then
\[
m(A_{t,\hat\Lambda}) \geq \limsup_{k\to \infty} m(A_{t,\Lambda_{n_{k}}}) \geq f(t) m(A),
\]
and the claim follows.
\end{proof}

\section{Existence of optimal maps}\label{S:final}

In this section we show that branching at starting points does not happen almost surely. Recall that 
\[
\Gamma = \{ (x,y)\in X \times X : \f(x) +\f^{c}(y) = c(x,y)\}
\]
and any optimal transport plan is concentrated on $\Gamma.$

\begin{lemma}\label{L:disjointevo}
Let $\Lambda_{1},\Lambda_{2} \subset \Gamma$ be compact sets such that
\begin{itemize}
\item[$i)$] $P_{1}(\Lambda_{1}) = P_{1}(\Lambda_{2})$;
\item[$ii)$] $P_{2}(\Lambda_{1}) \cap P_{2}(\Lambda_{2}) = \emptyset$.
\end{itemize}
Then $m(P_{1}(\Lambda_{1})) = m(P_{1}(\Lambda_{2})) = 0$.
\end{lemma}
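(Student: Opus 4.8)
The plan is to argue by contradiction, assuming $m(P_1(\Lambda_1)) = m(P_1(\Lambda_2)) = \delta > 0$ (the two quantities are equal by hypothesis $i)$), and to derive a violation of the total mass bound $m(K) < \infty$ by iterating the evolution estimate of Theorem \ref{T:evolution}. The key geometric input is that, because $\Lambda_1,\Lambda_2\subset\Gamma$ have the \emph{same} first marginal but \emph{disjoint} second marginals, a point $x\in P_1(\Lambda_1)=P_1(\Lambda_2)$ is connected through $\Gamma$ both to some $y\in P_2(\Lambda_1)$ and to some $y'\in P_2(\Lambda_2)$ with $y\neq y'$; the corresponding geodesics issuing from $x$ are forced to be distinct, and by Lemma \ref{L:noncrossing} they separate instantly for $t\in(0,1)$.

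First I would fix a time $t\in(0,1)$ and set $A := P_1(\Lambda_1) = P_1(\Lambda_2)$ (passing to a compact subset of positive measure if needed, using inner regularity of $m$). Applying Theorem \ref{T:evolution} to $\hat\Lambda_1 = (P_1(\Lambda_1)\times P_2(\Lambda_1))\cap\Gamma$ and to $\hat\Lambda_2$, I get $m(A_{t,\hat\Lambda_1}) \geq f(t)\,m(A)$ and $m(A_{t,\hat\Lambda_2}) \geq f(t)\,m(A)$. The crucial claim is that $A_{t,\hat\Lambda_1}$ and $A_{t,\hat\Lambda_2}$ are \emph{disjoint}: if $z$ lay in both, then $z = \gamma_t = \tilde\gamma_t$ for geodesics $\gamma,\tilde\gamma$ with $\gamma_0,\tilde\gamma_0\in A$, $\gamma_1\in P_2(\Lambda_1)$, $\tilde\gamma_1\in P_2(\Lambda_2)$, and since $P_2(\Lambda_1)\cap P_2(\Lambda_2)=\emptyset$ the endpoints differ, so $(\gamma_0,\gamma_1)$ and $(\tilde\gamma_0,\tilde\gamma_1)$ are distinct points of $\Gamma$ whose $t$-intermediate points coincide — contradicting Lemma \ref{L:noncrossing}. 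Hence $m(A) \geq m(A_{t,\hat\Lambda_1}) + m(A_{t,\hat\Lambda_2}) \geq 2f(t)\,m(A)$, giving $m(A)\bigl(1 - 2f(t)\bigr) \geq 0$.

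To turn this into a contradiction I need $f(t) > 1/2$ for at least one $t\in(0,1)$; since $f$ is continuous with $f(0)=1$, this holds for $t$ small. That already forces $m(A)=0$. Alternatively, and more robustly, I would iterate: the map $x\mapsto$ (its two "branches") is inherited on the evolved sets, so one can split each of $A_{t,\hat\Lambda_1}, A_{t,\hat\Lambda_2}$ again at a later parameter and pick up another factor of $2$, eventually beating any fixed lower bound on $f$; but the one-step argument with small $t$ is cleanest. The main obstacle is the disjointness claim for the evolved sets and making sure the hypotheses of Theorem \ref{T:evolution} genuinely apply to $\hat\Lambda_i$ with $A\subset P_1(\Lambda_i)$ — both reduce to Lemma \ref{L:noncrossing} and to the bookkeeping that $P_1(\Lambda_1)=P_1(\Lambda_2)=A$, so no real difficulty remains once the contradiction $m(A)\le 2f(t)\,m(A)$ with $f(t)>1/2$ is in place.
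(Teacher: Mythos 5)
Your overall strategy is the same as the paper's: use Lemma \ref{L:noncrossing} to show that the evolved sets $A_{t,\hat\Lambda_1}$ and $A_{t,\hat\Lambda_2}$ are disjoint for $t\in(0,1)$, apply Theorem \ref{T:evolution} to bound each from below by $f(t)\,m(A)$, and conclude by a ``mass doubling'' contradiction. The disjointness argument you give is correct and is exactly the paper's. However, there is a genuine gap at the pivotal inequality $m(A)\geq m(A_{t,\hat\Lambda_1})+m(A_{t,\hat\Lambda_2})$: for a \emph{fixed} $t\in(0,1)$ the evolved sets are sets of $t$-intermediate points and there is no reason for them to be contained in $A$ (they generically lie strictly between $A$ and the targets $P_2(\Lambda_i)$), so their union need not have measure bounded by $m(A)$. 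Without that containment, disjointness plus Theorem \ref{T:evolution} only gives $m(A_{t,\hat\Lambda_1}\cup A_{t,\hat\Lambda_2})\geq 2f(t)\,m(A)$ for a union living somewhere in $K$, which contradicts nothing. Your focus on securing $f(t)>1/2$ is therefore aimed at the wrong obstacle; the factor of $2$ is not the issue, the containment is.

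The paper closes exactly this gap by a limiting argument that your proposal is missing: since all geodesics involved stay in the compact set $K$ and hence have uniformly bounded speed, $A_{t,\hat\Lambda_1}$ and $A_{t,\hat\Lambda_2}$ converge to $A$ in Hausdorff distance as $t\to 0$, so for every $\ve>0$ and all $t$ small enough both are contained in $A^{\ve}=\{x: d(x,A)\leq\ve\}$. This yields $m(A^{\ve})\geq 2f(t)\,m(A)$, and letting first $t\to 0$ (so that $f(t)\to 1$) and then $\ve\to 0$ (so that $m(A^{\ve})\downarrow m(A)$ by continuity of the measure on the decreasing compact sets $A^\ve$) gives $m(A)\geq 2m(A)$, hence $m(A)=0$. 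Your one-step argument becomes correct once you insert this Hausdorff-convergence step; your alternative ``iterate and beat $m(K)$'' idea has the additional problem that the evolved pairs $(\gamma_t,\gamma_1)$ need not lie in $\Gamma$, so Theorem \ref{T:evolution} cannot simply be re-applied to the evolved configuration.
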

\begin{proof}
Note that since $P_{2}(\Lambda_{1}) \cap P_{2}(\Lambda_{2}) = \emptyset$, necessarily $\hat \Lambda_{1} \cap \hat \Lambda_{2} = \emptyset$, 
where $\hat \Lambda_{i}$ are defined by \eqref{E:square}, for $i = 1,2$.
Hence from Lemma \ref{L:noncrossing}, for every $A \subset  P_{1}(\Lambda_{1}) = P_{1}(\Lambda_{2})$
\[
A_{t,\hat \Lambda_{1}} \cap A_{t,\hat \Lambda_{2}} = \emptyset,
\]
for every $t \in (0,1)$. Then let $A : = P_{1}(\Lambda_{1}) = P_{1}(\Lambda_{2})$ and recall that as $t \to 0$ the sets $A_{t,\Lambda_{1}}$ and
$A_{t,\Lambda_{2}}$ both converge in Hausdorff topology to $A$. Put $A^\ve=\{x:d(x,A)\leq \ve\}.$ 
Then it follows from Proposition \ref{P:evolution} that
\begin{align*}
m(A) = &~ \limsup_{\ve \to 0} m(A^{\ve}) \geq \limsup_{t\to 0} m(A_{t,\Lambda_{1}} \cup A_{t,\Lambda_{2}}) \crcr
= & ~\limsup_{t\to 0} \big(m(A_{t,\Lambda_{1}} ) + m( A_{t,\Lambda_{2}})\big) \crcr
\geq &~  m(A) \limsup_{t \to 0} 2 f(t) = \alpha \cdot m(A), 
\end{align*}
with $\alpha > 1$. Hence, necessarily $m(P_{1}(\Lambda_{1})) = m(P_{1}(\Lambda_{2})) = m(A) = 0$, and the claim follows.
\end{proof}

We will use the following notation: $\Gamma(x): = (\{x \} \times X) \cap \Gamma$ and given a set $\Theta \subset X \times X$ we say that $T$ is a selection of $\Theta$ if $T: P_{1}(\Theta) \to X$ is $m$-measurable and $\gr(T) \subset \Theta$.

\begin{proposition}\label{P:existence}
Consider the sets
\[
E: = \{x \in P_{1}(\Gamma ) :\Gamma(x)\ is\, not\, a\, singleton\}, \qquad \Gamma_{E} : = \Gamma \cap (E \times X).
\]
Then for any selection $T$ of $\Gamma_{E}$ and every $\pi \in \Pi(\mu_{0},\mu_{1})$ with $\pi(\Gamma ) = 1$ it holds
\[
\pi (\Gamma_{E} \setminus \gr(T)) = 0.
\]
\end{proposition}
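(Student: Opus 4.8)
The plan is to reduce the whole statement to the single assertion $m(E)=0$. Granting this, $\mu_0(E)=0$ follows from $\mu_0\ll m$, and then the conclusion is immediate: any selection $T$ of $\Gamma_E$ satisfies $\gr(T)\subset\Gamma_E\subset E\times X$, so $\Gamma_E\setminus\gr(T)\subset E\times X$ and hence $\pi(\Gamma_E\setminus\gr(T))\le \pi(E\times X)=\mu_0(E)=0$ for every $\pi\in\Pi(\mu_0,\mu_1)$. As in Section \ref{s:2} I work under the standing reduction $\supp(\mu_0),\supp(\mu_1)\subset K$ with $K$ compact, so that the relevant set $\Gamma$ (i.e. $\Gamma\cap(K\times K)$, the only part charged by the plans under consideration) is compact; this is what makes the sets fed into Lemma \ref{L:disjointevo} genuinely compact.

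The core step is a countable decomposition of $E$ designed so that each piece is handled by Lemma \ref{L:disjointevo}. Fix a countable basis $\{B_i\}_{i\in\enne}$ of $X$ and let $\mathcal{I}:=\{(i,j): \clos B_i\cap\clos B_j=\emptyset\}$. For $(i,j)\in\mathcal{I}$ set $\Gamma^i:=\Gamma\cap(X\times\clos B_i)$, which is closed inside the compact set $\Gamma$, hence compact; then $F_{ij}:=P_1(\Gamma^i)\cap P_1(\Gamma^j)$ is compact as well. I claim $E=\bigcup_{(i,j)\in\mathcal{I}}F_{ij}$. If $x\in F_{ij}$ then $\Gamma(x)$ meets both $\clos B_i$ and $\clos B_j$, which are disjoint, so $\Gamma(x)$ is not a singleton and $x\in E$. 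Conversely, if $x\in E$ pick distinct $y_1,y_2\in\Gamma(x)$ and, since $\{B_i\}$ is a basis, choose basis elements $B_i\ni y_1$ and $B_j\ni y_2$ contained in the open balls of radius $d(y_1,y_2)/3$ around $y_1$ and $y_2$; then $\clos B_i\cap\clos B_j=\emptyset$, so $(i,j)\in\mathcal{I}$ and $x\in F_{ij}$. In particular $E$ is $\sigma$-compact, hence Borel, which disposes of the measurability issue.

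It now suffices to prove $m(F_{ij})=0$ for each $(i,j)\in\mathcal{I}$, and here Lemma \ref{L:disjointevo} applies directly. Writing $A:=F_{ij}$, put $\Lambda_1:=\Gamma\cap(A\times\clos B_i)$ and $\Lambda_2:=\Gamma\cap(A\times\clos B_j)$; both are closed subsets of the compact set $\Gamma$, hence compact, and both are contained in $\Gamma$. Every $x\in A\subset P_1(\Gamma^i)$ admits $y\in\clos B_i$ with $(x,y)\in\Gamma$, so $(x,y)\in\Lambda_1$; together with the trivial inclusion $P_1(\Lambda_1)\subset A$ this gives $P_1(\Lambda_1)=A$, and symmetrically $P_1(\Lambda_2)=A$, which is hypothesis $i)$ of Lemma \ref{L:disjointevo}. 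Hypothesis $ii)$ holds because $P_2(\Lambda_1)\subset\clos B_i$, $P_2(\Lambda_2)\subset\clos B_j$, and $\clos B_i\cap\clos B_j=\emptyset$. Lemma \ref{L:disjointevo} therefore gives $m(A)=m(P_1(\Lambda_1))=0$. Summing over the countable family $\mathcal{I}$ yields $m(E)\le\sum_{(i,j)\in\mathcal{I}}m(F_{ij})=0$, and the proposition follows as in the first paragraph.

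I do not expect a serious obstacle: all the geometric content has already been extracted in Lemma \ref{L:noncrossing}, Theorem \ref{T:evolution} and Lemma \ref{L:disjointevo}. The only points requiring attention are arranging that $\Gamma$ (and hence $\Lambda_1,\Lambda_2$) may be taken compact, and setting up the basis decomposition $E=\bigcup F_{ij}$ so that it simultaneously settles the measurability of $E$ and turns the a priori non-uniform branching over $E$ into countably many applications of the disjoint-evolution estimate.
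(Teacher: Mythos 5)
Your proof is correct, but it takes a genuinely different route from the paper's and in fact establishes a stronger conclusion. The paper argues by contradiction: it fixes a selection $T$ and a plan $\pi$ with $\pi(\Gamma_E\setminus\gr(T))>0$, reduces by inner regularity to $E$ compact and $T$ continuous, extracts a scale $1/n$ at which $d(y,T(x))\geq 1/n$ on a set of positive $\pi$-measure, covers that set by finitely many balls of radius $1/5n$, selects a ball of positive mass whose centre is an $m$-density point of the first projection (this is the one place where the doubling property coming from $\mathsf{MCP}$ is invoked), and finally produces $\Lambda_1\subset\gr(T)$ and $\Lambda_2\subset\{d(y,T(x))\geq 1/n\}$ with common first projection of positive measure and disjoint second projections, contradicting Lemma \ref{L:disjointevo}. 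You instead prove directly that $m(E)=0$, by decomposing $E=\bigcup_{(i,j)\in\mathcal{I}}F_{ij}$ over pairs of basis elements with disjoint closures and feeding each pair $\Lambda_1=\Gamma\cap(F_{ij}\times\clos B_i)$, $\Lambda_2=\Gamma\cap(F_{ij}\times\clos B_j)$ into Lemma \ref{L:disjointevo}; the hypotheses are verified exactly as you state. This avoids the selection, the continuity and inner-regularity reductions, the finite covering and the density-point argument (hence the use of doubling) altogether, and the stronger output $m(E)=0$ gives $\mu_0(E)=0$ and thus $\pi(\Gamma_E)=0$ for every admissible $\pi$, which would also streamline the proof of Theorem \ref{T:main}. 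The one point to make explicit is the convention that $\Gamma$ is taken inside $K\times K$ (so that $\Gamma$, $\Gamma^i$ and $P_1(\Gamma^i)$ are compact and $E$ is the branching set of this restricted $\Gamma$): a priori a point could branch only toward targets outside $K$, but since every plan under consideration is concentrated on $\supp(\mu_0)\times\supp(\mu_1)\subset K\times K$, the optimality characterization via $\Gamma$ is unaffected by this restriction, and the paper's own proof operates under the same implicit convention when it treats $\Gamma_E$ and its subsets as compact. With that understood, your argument is complete.
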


\begin{proof}

{\it Step 1.}
Suppose by contradiction the existence of $\pi \in \Pi(\mu_{0},\mu_{1})$ with $\pi(\Gamma) = 1$ and of a selection $T$ of $\Gamma_{E}$ such that 
\[
\pi(\Gamma_{E} \setminus \gr(T)) = \beta >0.
\]
By inner regularity, to prove the complete statement it is enough to prove it under the additional assumptions
that $E$ is compact and $T$ is continuous.

Note that 
\[
\Gamma_{E} \setminus \gr(T) = \bigcup_{n =1}^{\infty} \{ (x,y) \in \Gamma_{E} : d(y,T(x))\geq1/n \}\, .
\]
Hence, there exists $n \in \enne$ such that 
\[
\pi\left(\{ (x,y) \in \Gamma_{E} : d(y,T(x))\geq1/n \}\right) \geq \beta' >0.
\]
Put $\Lambda:= \{ (x,y) \in \Gamma_{E} : d(y,T(x))\geq1/n \}$. Note that $m(P_{1}(\Lambda)) > 0$.

{\it Step 2.} From the continuity of $T$ it follows the existence of $\eta >0$ so that if $d(x,z) \leq \eta$ then 
$d(T(x),T(z)) \leq 1/2n$. 
Clearly we can take $x \in P_{1}(\Lambda)$ so that 
\[
m(P_{1}(\Lambda) \cap \bar B_{\eta}(x))>0.
\]
where $\bar B_\eta(x)$ denotes the closed ball of radius $\eta$ around $x$. So consider the two sets
\[
\Xi_{1} : = \gr(T) \cap \left(\left(\bar B_{\eta}(x) \cap P_{1}(\Lambda)\right) \times X \right), \quad \Xi_{2} : = 
\left(\bar B_{\eta}(x) \times X\right) \cap \Lambda. 
\]
By construction $\Xi_{1},\Xi_{2}\subset \Gamma$ and 
\[
P_{1}(\Xi_{1}) = P_{1}(\Xi_{2}) = P_{1}(\Lambda) \cap \bar B_{\eta}(x),
\]
therefore $m(P_{1}(\Xi_{1}))>0$. 

Moreover for any $y \in P_{2}(\Xi_{2})$ there exists $w \in \bar B_{\eta}(x)$ so that 
\[
d(y,T(w)) \geq \frac{1}{n}. 
\]
Hence for any $z \in \bar B_{\eta}(x)$ it holds
\[
d(y,T(z)) \geq d(y,T(w)) - d(T(w),T(z)) \geq \frac{1}{n} - \frac{1}{2n} = \frac{1}{n}.
\]
Hence 
\[
P_{2}(\Xi_{1}) \cap  P_{2}(\Xi_{2}) = \emptyset.
\]
Since this is in contradiction with Lemma \ref{L:disjointevo}, the claim is proved.
\end{proof}

We can now state the main result of this paper whose 
 proof now follows as a straightforward corollary of what we proved so far.

\begin{theorem}\label{T:main}
Let $(X,d,m)$ be a non-branching metric measure space verifying Assumption \ref{A:mcc}. Let $\mu_0$ and $\mu_1$ be two probability measures over $X$ with finite $c$-transport distance.
If $\mu_0\ll m$ and $h$ is strictly-convex and non-decreasing,
for any $\pi \in \Pi(\mu_{0},\mu_{1})$ such that $\pi(\Gamma) = 1$ there exists an $m$-measurable map $T:X \to X$ such that 
\[
\pi(\gr(T)) =1.
\]
\end{theorem}
\begin{proof}
Let $\pi \in \Pi(\mu_{0},\mu_{1})$ be any transference plan so that $\pi(\Gamma)=1$.
As for Proposition \ref{P:existence}, consider the sets
\[
E: = \{x \in P_{1}(\Gamma ) :\Gamma(x)\ is\, not\, a\, singleton\}, \qquad \Gamma_{E} : = \Gamma \cap (E \times X).
\]
Since
\[
\Gamma_{E} = P_{12}\left( \{ (x,y,z,w) \in \Gamma \times \Gamma : d(x,z) =0, d(y,w)>0\}   \right),
\]
the set $\Gamma_{E}$ is an analytic set. For the definition of analytic set, see Chapter 4 of \cite{Sri:courseborel}. 
We can then use the Von Neumann Selection Theorem for analytic sets, see Theorem 5.5.2 of \cite{Sri:courseborel}, to obtain a map $T : E \to X$, $\mathcal{A}$-measurable, where $\mathcal{A}$ is the 
$\sigma$-algebra generated by analytic sets, so that $(x,T(x)) \in \Gamma_{E}$.

Then Proposition \ref{P:existence} implies that 
\[
\pi\llcorner_{\Gamma_{E}} = (Id,T)_{\sharp} \mu_{0}\llcorner_{E}.
\]
Since on $\Gamma\setminus \Gamma_{E}$ $\pi$ is already supported on a graph, the claim follows.
\end{proof}

This directly implies

\begin{corollary}
Under the assumptions of Theorem \ref{T:main}, there is a unique optimal transport map.
\end{corollary}
\begin{proof}
The last theorem shows that every optimal coupling is induced by a transport map. As the set of all optimal couplings is convex this directly implies the uniqueness.
\end{proof}

\end{document}